\title{Density Dichotomy in Random Words}
\author[J. Cooper and D. Rorabaugh]
       {Joshua Cooper \& Danny Rorabaugh}
\address{Department of Mathematics, University of South Carolina \\ 
Department of Mathematics and Statistics, Queen's University}
\begin{document}

\setcounter{page}{1}
\thispagestyle{empty}


\label{firstpage}

\begin{abstract}
Word $W$ is said to \emph{encounter} word $V$ provided there is a homomorphism $\phi$ mapping letters to nonempty words so that $\phi(V)$ is a substring of $W$. For example, taking $\phi$ such that $\phi(h)=c$ and $\phi(u)=ien$, we see that ``science'' encounters ``huh'' since $cienc=\phi(huh)$. The density of $V$ in $W$, $\delta(V,W)$, is the proportion of substrings of $W$ that are homomorphic images of $V$. So the density of ``huh'' in ``science'' is $2/{8 \choose 2}$. A word is \emph{doubled} if every letter that appears in the word appears at least twice.

The dichotomy: Let $V$ be a word over any alphabet, $\Sigma$ a finite alphabet with at least 2 letters, and $W_n \in \Sigma^n$ chosen uniformly at random. Word $V$ is doubled if and only if $\E(\delta(V,W_n)) \rightarrow 0$ as $n \rightarrow \infty$.

We further explore convergence for nondoubled words and concentration of the limit distribution for doubled words around its mean.
\end{abstract}

\maketitle

\section{Introduction}

Graph densities provide the basis for many recent advances in extremal graph theory and the limit theory of graph (see Lov\'asz \cite{L-12}). 
To see if this paradigm is similarly productive for other discrete structures, we here explore pattern densities in free words. 
In particular, we consider the asymptotic densities of a fixed pattern in random words as a first step in developing the combinatorial limit theory of free words. 

\subsection{Definitions}

Free words (or simply, words) are elements of the semigroup formed from a nonempty alphabet $\Sigma$ with the binary operation of concatenation, denoted by juxtaposition, and with the empty word $\varepsilon$ as the identity element. 
The set of all finite words over $\Sigma$ is $\Sigma^*$ and the set of $\Sigma$-words of length $k \in \N$ is $\Sigma^k$. 
For alphabets $\Gamma$ and $\Sigma$, a homomorphism $\phi: \Gamma^* \rightarrow \Sigma^*$ is uniquely defined by a function $\phi:\Gamma \rightarrow \Sigma^*$. 
We call a homomorphism \emph{nonerasing} provided it is defined by $\phi:\Gamma \rightarrow \Sigma^* \setminus \{\varepsilon\}$; that is, no letter maps to $\varepsilon$, the empty word.

Let $V$ and $W$ be words.
The \emph{length} of $W$, denoted $|W|$, is the number of letters in $W$, including multiplicity.
Denote with ${\rm L}(W)$ the set of letters found in $W$ and with $||W||$ the number of letter repeats in $W$, so $|W| = |{\rm L}(W)| + ||W||$.
For example $|banana| = 6$, ${\rm L}(banana) = \{a,b,n\}$, and $||banana|| = 3$.
$W$ has ${|W|+1 \choose 2}$ \emph{substrings}, each defined by an ordered pair $(i,j)$ with $0\leq i < j \leq |W|$.
Denote with $W[i,j]$ the word found in the $(i,j)$-substring, which consists of $j-i$ consecutive letters of $W$, beginning with the $(i+1)$-th.
$V$ is a \emph{factor} of $W$, denoted $V \leq W$, provided $V = W[i,j]$ for some $0\leq i < j \leq |W|$; that is, $W = SVT$ for some (possibly empty) words $S$ and $T$.
For example, $banana[2,6] = nana \leq banana$.

$W$ is an \emph{instance} of $V$, or \emph{$V$-instance}, provided there exists a nonerasing homomorphism $\phi$ such that $W = \phi(V)$. 
(Here $V$ is sometimes referred to as a \emph{pattern} or \emph{pattern word}).
For example, $banana$ is an instance of $cool$ using homomorphism $\phi$ defined by $\phi(c)=b$, $\phi(o)=an$, and $\phi(l)=a$.
$W$ \emph{encounters} $V$, denoted $V \preceq W$, provided $W'$ is an instance of $V$ for some factor $W' \leq W$.
For example $cool \preceq bananasplit$.
For $W \neq \varepsilon$, denote with $\delta(V,W)$ the proportion of substrings of $W$ that give instances of $V$.
For example, $\delta(xx, banana) = 2/{7 \choose 2}$. 
$\delta_{sur}(V,W)$ is the characteristic function for the event that $W$ is an instance of $V$.

Fix alphabets $\Gamma$ and $\Sigma$.
An \emph{encounter} of $V$ in $W$ is an ordered triple $(a,b,\phi)$ where $W[a,b] = \phi(V)$ for homomorphism $\phi:\Gamma^* \rightarrow \Sigma^*$.
When $\Gamma = {\rm L}(V)$ and $W \in  \Sigma^*$, denote with $\hom(V,W)$ the number of encounters of $V$ in $W$.
For example, $\hom(ab,cde) = 4$ since $cde[0,2]$ and $cde[1,3]$ are instances of $ab$, each for one homomorphism $\{a,b\}^* \rightarrow \{c,d,e\}^*$, and $cde[0,3]$ is an instance of $ab$ under two homomorphisms.
Note that the conditions on $\Gamma$ and $\Sigma$ are necessary for $\hom(V,W)$ to not be 0 or $\infty$.

\begin{fact}
	For fixed words $V$ and $W \neq \varepsilon$, 
	\[ {|W| + 1 \choose 2}\delta(V,W) \leq \hom(V,W). \]
\end{fact}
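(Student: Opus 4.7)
The plan is to unpack both sides of the inequality as counts over the same index set of substrings $(i,j)$ with $0 \le i < j \le |W|$, and then exhibit a trivial injection-plus-multiplicity relationship.

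First, I would rewrite $\binom{|W|+1}{2}\delta(V,W)$ using the definition of $\delta$: it is exactly the number of pairs $(i,j)$ with $0 \le i < j \le |W|$ such that $W[i,j]$ is a $V$-instance, i.e., such that there exists a nonerasing homomorphism $\phi:{\rm L}(V)^* \to \Sigma^*$ with $\phi(V) = W[i,j]$. Call this set of pairs $\mathcal{S}$.

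Next, using the definition of $\hom(V,W)$, I would partition its counted triples $(a,b,\phi)$ according to the projection $(a,b,\phi)\mapsto(a,b)$. The image of this projection is precisely $\mathcal{S}$, since the existence of any valid $\phi$ for a pair $(a,b)$ is exactly what makes $W[a,b]$ a $V$-instance. For each pair $(a,b) \in \mathcal{S}$, there is at least one $\phi$ certifying that $W[a,b] = \phi(V)$, so each fiber of the projection has size at least $1$. Summing fiber sizes yields $\hom(V,W) \ge |\mathcal{S}| = \binom{|W|+1}{2}\delta(V,W)$.

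There is no real obstacle here; the content is just noting that $\hom$ counts (substring, homomorphism) pairs while $\delta$ counts only substrings, and a single substring can arise from multiple homomorphisms, as already illustrated by the example $\hom(ab,cde)=4$ in the excerpt. Inequality, rather than equality, is needed exactly because substrings like $cde[0,3]$ can be realized by more than one homomorphism.
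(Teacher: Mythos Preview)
Your argument is correct and is exactly the natural justification: the left side counts substrings $(i,j)$ that are $V$-instances, the right side counts triples $(i,j,\phi)$, and the obvious projection is surjective onto such pairs with nonempty fibers. The paper does not supply a proof at all---it records this as a \emph{Fact} immediately after introducing $\delta$ and $\hom$---so your unpacking of the definitions is precisely what the authors leave implicit.
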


\subsection{Background}

Word encounters have primarily been explored from the perspective of avoidance.
Word $W$ \emph{avoids} a (pattern) word $V$ provided $V \not \preceq W$.
$V$ is \emph{$k$-avoidable} provided, from a $k$-letter alphabet, there are infinitely many words that avoid $V$.
The premier result on word avoidance is generally considered to be the proof of Thue \cite{T-06} that the word $aa$ is 3-avoidable but not 2-avoidable.
Two seminal papers on avoidability, by Bean, Ehrenfeucht, and McNulty \cite{BEM-79} and Zimin \cite{Z-82,Z-84}, include classification of unavoidable words--that is, words that are not $k$-avoidable for any $k$.
Recently, the authors \cite{CR-14} and Tao \cite{T-14} investigated bounds on the length of words that avoid unavoidable words.
There remain a number of open problems regarding which words are $k$-avoidable for particular $k$.
See Lothaire \cite{L-02} and Currie \cite{C-05} for surveys on avoidability results and Blanchet-Sadri and Woodhouse \cite{BW-12} for recent work on 3-avoidability.

A word is \emph{doubled} provided every letter in the word occurs at least twice. 
Otherwise, if there is a letter that occurs exactly once, we say the word is \emph{nondoubled}
Every doubled word is $k$-avoidable for some $k>1$ \cite{L-02}.
For a doubled word $V$ with $k \geq 2$ distinct letters and an alphabet $\Sigma$ with $|\Sigma| = q \geq 4$, $(k,q) \neq (2,4)$, Bell and Goh \cite{BG-07} showed that there are at least $\lambda(k,q)^n$ words in $\Sigma^n$ that avoid $V$, where 
\[ \lambda(k,q) = m\left(1 + \frac{1}{(m-2)^k}\right)^{-1}. \]
This exponential lower bound on the number of words avoiding a doubled word hints at the moral of the present work: instances of doubled words are rare.
For a doubled word $V$ and an alphabet $\Sigma$ with at least 2 letters, the probability that a random word $W_n \in \Sigma^n$ avoids $V$ is asymptotically 0. 
Indeed, the event that $W_n[b|V|,(b+1)|V|]$ is an instance of $V$ has nonzero probability and is independent for distinct $b$.
Nevertheless, $\delta(V,W_n)$, the proportion of substrings of $W$ that are instances of $V$, is asymptotically negligible.

\section{The Dichotomy}

In this section, we establish a density-motivated bipartition of all free words into doubled and nondoubled words. 
From there, we present a more detailed analysis of the asymptotic densities in these two classes. 

\begin{theorem} \label{main}
	Let $V$ be a word on any alphabet. Fix an alphabet $\Sigma$ with $q\geq2$ letters, and let $W_n \in \Sigma^n$ be chosen uniformly at random. The following are equivalent:
	\begin{enumerate}
		\renewcommand{\theenumi}{\roman{enumi}}
		\item $V$ is doubled (that is, every letter in $V$ occurs at least twice);
		\item $\lim_{n\rightarrow \infty} \E(\delta(V,W_n)) = 0$.
	\end{enumerate}
\end{theorem}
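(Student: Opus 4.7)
The plan is to handle the two implications separately, both via linearity of expectation and a direct enumeration of encounters parametrized by the profile of image lengths. The Fact carries one direction by an upper bound on $\E(\hom(V,W_n))$; the other requires a matching lower bound on $\E(\delta(V,W_n))$ obtained by exhibiting a single favorable length profile.

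For $(i) \Rightarrow (ii)$, suppose $V$ is doubled with distinct letters $x_1,\ldots,x_k$ of multiplicities $m_1,\ldots,m_k \geq 2$. By the Fact, it suffices to show $\E(\hom(V,W_n)) = O(n)$. Expanding by linearity over triples $(i,j,\phi)$ and parametrizing each nonerasing $\phi$ by its length profile $(\ell_1,\ldots,\ell_k)$ with $\ell_s := |\phi(x_s)| \geq 1$, each profile with $\sum_s m_s \ell_s = j-i$ contributes $q^{\sum_s \ell_s}$ matching homomorphisms, each succeeding on the random substring with probability $q^{-(j-i)}$. Summing over the at most $n+1$ valid intervals per profile gives
\[
	\E(\hom(V,W_n)) \leq (n+1) \sum_{(\ell_s):\,\ell_s \geq 1} q^{-\sum_s (m_s-1)\ell_s} = (n+1) \prod_{s=1}^k \frac{1}{q^{m_s-1}-1},
\]
a finite multiple of $n+1$ because every $m_s - 1 \geq 1$ keeps each factor convergent. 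Dividing by ${n+1 \choose 2}$ yields the desired $O(1/n)$ decay.

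For $(ii) \Rightarrow (i)$, argue the contrapositive: if some letter $a$ of $V$ has multiplicity $1$, then $\E(\delta(V,W_n))$ stays bounded below by a positive constant. The key observation is that a singleton letter contributes $q^{0}=1$ to the matching probability of any profile, so it can absorb arbitrary ``free'' length at no probabilistic cost. For every interval $(i,j)$ with $L := j-i \geq |V|$, the profile given by $\ell_s = 1$ for all $x_s \neq a$ together with $\ell_a = L - (|V|-1) \geq 1$ satisfies $\sum_s m_s \ell_s = L$, and $W_n[i,j]$ realizes this profile as a $V$-instance with probability exactly $q^{-(|V|-k)}$, a positive constant $c_V$ depending only on $V$. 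Since this is a lower bound on the probability that $W_n[i,j]$ is a $V$-instance, summing over the ${n-|V|+2 \choose 2}$ such intervals and dividing by ${n+1 \choose 2}$ yields $\liminf_n \E(\delta(V,W_n)) \geq c_V > 0$.

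The main step is conceptual rather than technical: recognizing that a single singleton letter lets one profile absorb all of the substring's length at no probabilistic cost, forcing a uniform positive density, whereas the doubled hypothesis constrains every letter enough for the entire profile-sum to converge to a constant. Everything else reduces to routine linearity-of-expectation bookkeeping and a convergence check on a finite geometric product.
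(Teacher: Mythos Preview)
Your proof is correct. The contrapositive direction matches the paper's argument exactly: write $V=TxU$, set every other letter's image length to $1$, let the unique letter absorb the remaining length, and observe that the resulting instance probability is the constant $q^{-(|V|-k)}=q^{-\lVert TU\rVert}$.

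The forward direction differs in a pleasant way. The paper first invokes the anagram observation $\E(\hom(V,W_n))=\E(\hom(V',W_n))$ to rewrite $V$ as $V'=XY$ with $X$ containing one copy of each letter, then bounds the number of encounters by summing over the length of $\phi(X)$ and appealing to convergence of $\sum_j \binom{j+1}{k+1}q^{-j}$. You instead parametrize encounters directly by the length profile $(\ell_1,\dots,\ell_k)$ and factor the resulting sum as the finite product $\prod_s (q^{m_s-1}-1)^{-1}$, which both avoids the anagram device and yields an explicit constant depending on the multiplicities. The paper's route has the advantage that the same bound works uniformly for all doubled $V$ with $k$ letters (only $k$ enters), while yours is slightly sharper and more transparent about where the doubled hypothesis $m_s\geq 2$ is used.
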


\begin{proof}
	First we prove $(i) \Longrightarrow (ii)$. 
	Note that in $W_n$, there are in expectation the same number of encounters of $V$ as there are of any anagram of $V$. 
	Indeed, if $V'$ is an anagram of $V$ and $\phi$ is a nonerasing homomorphism, then $|\phi(V')| = |\phi(V)|$.
	\begin{fact} \label{anagram}
		If $V'$ is an anagram of $V$, then $\E(\hom(V,W_n))=\E(\hom(V',W_n))$.
	\end{fact}
	
	Assume $V$ is doubled and let $\Gamma = {\rm L}(V)$ and $k = |\Gamma|$.
	Given Fact \ref{anagram}, we consider an anagram $V' = XY$ of $V$, where $|X| = k$ and $\Gamma = {\rm L}(X) = {\rm L}(Y)$.
	That is, $X$ comprises one copy of each letter in $\Gamma$ and all the duplicate letters of $V$ are in $Y$.
	
	We obtain an upper bound for the average density of $V$ by estimating $\E(\hom(V',W_n))$. 
	To do so, sum over starting position $i$ and length $j$ of encounters of $X$ in $W_n$ that might extend to an encounter of $V'$.
	There are ${j+1 \choose k+1}$ homomorphisms $\phi$ that map $X$ to $W_n[i,i+j]$ and the probability that $W_n[i+j,i+j+|\phi(Y)|] = \phi(Y)$ is at most $q^{-j}$.
	Also, the series $\sum_{j = k}^{\infty} {j+1 \choose k+1} q^{-j}$ converges (try the ratio test) to some $c$ not dependent on $n$.
	\begin{eqnarray*}
		\E(\delta(V,W_n)) &\leq& \frac{1}{{n+1 \choose 2}}\E\left(\hom(V',W_n)\right) \\
		&<& \frac{1}{{n+1 \choose 2}} \sum_{i = 0}^{n - |V|} \sum_{j = k}^{n - i} {j+1 \choose k+1} q^{-j} \\
		&<& \frac{1}{{n+1 \choose 2}} \sum_{i = 0}^{n - |V|} c \\
		&=& \frac{c(n-|V|+1)}{{n+1 \choose 2}} \\
		&=& O(n^{-1}).
	\end{eqnarray*}
	
	We prove $(ii) \Longleftarrow (i)$ by contraposition. Assume there is a letter $x$ that occurs exactly once in $V$.
	Write $V = TxU$ where ${\rm L}(V) \setminus {\rm L}(TU) = \{x\}$.
	We obtain a lower bound for $\E(\delta(V,W_n))$ by only counting encounters with $|\phi(TU)| = |TU|$.
	Note that each such encounter is unique to its instance, preventing double-counting.
	For this undercount, we sum over encounters with $W_n[i,i+j]=\phi(x)$.
	\begin{eqnarray*}
		\E(\delta(V,W_n)) &=& \E(\delta(TxU,W_n)) \\
		&\geq&\frac{1}{{n+1 \choose 2}} \sum_{i = |T|}^{n-|U|-1} \sum_{j = 1}^{i-|T|} q^{-||TU||} \\
		&=&q^{-||TU||}\frac{1}{{n+1 \choose 2}} \sum_{i = |T|}^{n-|U|-1} (i - |T|) \\
		&=&q^{-||TU||}\frac{{n-|UT| \choose 2}}{{n+1 \choose 2}}  \\
		&\sim&q^{-||TU||}\\
		&>&0.
	\end{eqnarray*}

\end{proof}

It behooves us now to develop more precise theory for these two classes of words: doubled and nondoubled.
Lemma \ref{base} below both helps develop that theory and gives insight into the detrimental effect that letter repetition has on encounter frequency.

\begin{fact} \label{CRT}
	For $\overline{r} = \{r_1, \ldots, r_k\} \in (\Z^+)^k$ and $d = \gcd_{i \in [k]}(r_i)$, there exists integer $N = N_{\overline{r}}$ such that for every $n>N$ there exist coefficients $a_1, \ldots, a_k \in \Z^+$ such that $dn = \sum_{i=1}^{k} a_ir_i$ and $a_i \leq N$ for $i \geq 2$.
\end{fact}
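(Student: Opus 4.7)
The plan is to reduce to the coprime case by rescaling, then for each residue class modulo $r_1/d$ to fix a bounded representation using only $r_2, \ldots, r_k$, and finally to absorb the bulk of $dn$ into the coefficient $a_1$.

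First I would set $s_i = r_i/d$, so that $\gcd(s_1, \ldots, s_k) = 1$ and the statement reduces to finding positive integers $a_1, \ldots, a_k$ with $n = \sum_{i=1}^k a_i s_i$ and $a_i \leq N$ for $i \geq 2$ (the case $k=1$ being trivial). The key observation is that the subgroup of $\Z/s_1\Z$ generated by $s_2, \ldots, s_k$ has order $s_1/\gcd(s_1, s_2, \ldots, s_k) = s_1$, hence is all of $\Z/s_1\Z$. So for each residue $j \in \{0, 1, \ldots, s_1 - 1\}$ I would produce positive integers $b_2^{(j)}, \ldots, b_k^{(j)}$ with $\sum_{i \geq 2} b_i^{(j)} s_i \equiv j \pmod{s_1}$: first take any integer solution (by Bezout), then add sufficiently many copies of $s_1$ to each coefficient to make it positive, which preserves the congruence because $s_1 s_i \equiv 0 \pmod{s_1}$. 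Let $N_1 = \max_{j, i} b_i^{(j)}$, a finite constant depending only on $\overline{r}$.

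Given $n$, I would let $j = n \bmod s_1$, set $a_i = b_i^{(j)}$ for $i \geq 2$, and put $a_1 = (n - \sum_{i \geq 2} a_i s_i)/s_1$. By construction $a_1$ is an integer, and $a_1 \geq 1$ whenever $n \geq s_1 + N_1 \sum_{i \geq 2} s_i$. Setting $N := \max\bigl(N_1,\; s_1 + N_1 \sum_{i \geq 2} s_i\bigr)$ then delivers the required threshold and upper bound simultaneously.

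The only step with any subtlety is the positivity adjustment of the $b_i^{(j)}$, since it must be done uniformly in $n$; but this is immediate from the finiteness of residues modulo $s_1$ together with the fact that adding multiples of $s_1$ preserves the mod-$s_1$ constraint. The remainder is direct bookkeeping, so I foresee no real obstacle.
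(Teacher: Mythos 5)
Your proof is correct. The paper states this as a Fact without proof, so there is nothing to compare against; your argument---rescale by $d$ to reduce to $\gcd(s_1,\ldots,s_k)=1$, realize each residue modulo $s_1$ as a bounded \emph{positive} integer combination of $s_2,\ldots,s_k$ by applying Bezout and then shifting coefficients by multiples of $s_1$, and absorb the remainder into $a_1$---fills the gap cleanly, and taking $N=\max\bigl(N_1,\ s_1 + N_1\sum_{i\geq 2}s_i\bigr)$ correctly handles the dual role of $N$ as both the threshold on $n$ and the cap on $a_2,\ldots,a_k$.
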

%
%

\begin{lem} \label{base}
	For any word $V$, let $\Gamma = {\rm L}(V) = \{x_1, \ldots, x_k\}$ where $x_i$ has multiplicity $r_i$ for each $i \in [k]$. 
	Let $U$ be $V$ with all letters of multiplicity $r= \min_{i \in [k]}(r_i)$ removed. 
	Finally, let $\Sigma$ be any finite alphabet with $|\Sigma| = q\geq 2$ letters. 
	Then for a uniformly randomly chosen $V$-instance $W \in \Sigma^{dn}$, where $d = \gcd_{i \in [k]}(r_i)$, there is asymptotically almost surely a homomorphism $\phi: \Gamma^* \rightarrow \Sigma^*$ with $\phi(V) = W$ and $|\phi(U)| < \sqrt{dn}$.
\end{lem}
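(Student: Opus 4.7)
The plan is to reduce the problem to counting length vectors $\bar\ell = (\ell_1, \ldots, \ell_k)$, where $\ell_i$ records $|\phi(x_i)|$, and then to show that ``good'' vectors (those with $|\phi(U)| = \sum_{i:\, r_i > r} r_i \ell_i < \sqrt{dn}$) produce an overwhelming fraction of all $V$-instances of length $dn$. The enabling observation is an injectivity statement: for each $\bar\ell \in (\Z^+)^k$ with $\sum_i r_i \ell_i = dn$, the positions in $\phi(V)$ occupied by each $\phi(x_i)$ are determined by $\bar\ell$ and $V$ alone, so the assignment $(y_1, \ldots, y_k) \in \prod_i \Sigma^{\ell_i} \mapsto \phi(V)$ (setting $\phi(x_i) = y_i$) is injective. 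Thus exactly $q^{\sum_i \ell_i}$ distinct $V$-instances are realized via length vector $\bar\ell$.

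I would then bound the number $G$ of good $V$-instances from below by exhibiting a single convenient good $\bar\ell^*$. Reordering so that $x_1$ has minimum multiplicity $r$, Fact \ref{CRT} supplies, for $dn$ large, a vector $\bar\ell^* \in (\Z^+)^k$ with $\sum_i r_i \ell_i^* = dn$ and $\ell_i^* \leq N$ for $i \geq 2$, so that $|\phi(U)| \leq N|U| = O(1) < \sqrt{dn}$ while $\ell_1^* = dn/r - O(1)$. Hence $G \geq q^{\sum_i \ell_i^*} \geq q^{dn/r - O(1)}$.

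For the upper bound on the number $B$ of bad $V$-instances (those arising only from bad length vectors), I would use the identity
\[ \sum_i \ell_i \;=\; \frac{dn}{r} - \frac{1}{r}\sum_{i:\, r_i > r}(r_i - r)\ell_i, \]
together with $(r_i - r)/r_i \geq 1/(r+1)$ when $r_i > r$, to conclude that any bad $\bar\ell$ satisfies
\[ \sum_{i:\, r_i > r}(r_i - r)\ell_i \;\geq\; \frac{1}{r+1}\sum_{i:\, r_i > r} r_i \ell_i \;\geq\; \frac{\sqrt{dn}}{r+1}, \]
and so $\sum_i \ell_i \leq dn/r - \sqrt{dn}/(r(r+1))$. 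Summing $q^{\sum_i \ell_i}$ over the $O(n^{k-1})$ valid length vectors yields $B \leq n^{O(1)} \cdot q^{dn/r - \sqrt{dn}/(r(r+1))}$, hence $B/G \leq n^{O(1)} q^{-\Omega(\sqrt{n})} \to 0$, which is the claim.

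The technical pinch-point is converting the weight condition $\sum r_i \ell_i = dn$ into a uniform reduction of $\sum \ell_i$ proportional to $|\phi(U)|$: the strict inequality $r_i > r$ for each letter of $U$ is precisely what makes every unit of ``bad mass'' $r_i \ell_i$ cost a constant fraction of itself from $\sum \ell_i$, and this constant ratio is what lets the $\sqrt{dn}$ gap between bad and good vectors dominate the polynomial count of length vectors. The edge case $U = \varepsilon$ (all letters have multiplicity $r$) is trivial since then $|\phi(U)| = 0$.
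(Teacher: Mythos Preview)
Your proposal is correct and follows essentially the same strategy as the paper: both lower-bound the good instances by exhibiting a single length vector via Fact~\ref{CRT} (using that fixed lengths make $\phi\mapsto\phi(V)$ injective), and upper-bound the bad instances by the same exponent estimate $\sum_i \ell_i \le dn/r - \sqrt{dn}/(r(r+1))$ summed over polynomially many length vectors. Your presentation makes the algebraic identity behind that exponent bound more explicit, but the two arguments are the same in substance.
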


\begin{proof}
	Let $a_n$ be the number of $V$-instances in $\Sigma^n$ and $b_n$ be the number of homomorphisms $\phi : \Gamma^* \rightarrow \Sigma^*$ such that $|\phi(V)| =  n$.
	Let $b_n^1$ be the number of these $\phi$ such that $\phi(U) < \sqrt{n}$ and $b_n^2$ the number of all other $\phi$ so that $b_n = b_n^1 + b_n^2$.
	Similarly, let $a_n^1$ be the number of $V$-instances in $\Sigma^n$ for which there exists a $\phi$ counted by $b_n^1$ and $a_n^2$ the number of instances with no such $\phi$, so $a_n = a_n^1 + a_n^2$.
	Observe that $a_n^2 \leq b_n^2$.

	Without loss of generality, assume $r_1 = r$ (rearrange the $x_i$ if not). 
	We now utilize $N = N_{\overline{r}}$ from Proposition \ref{CRT}. 
	For sufficiently large $n$, we can undercount $a_{dn}^1$ by counting homomorphisms $\phi$ with $|\phi(x_i)| = a_i$ for the $a_i$ attained from Fact \ref{CRT}. 
	Indeed, distinct homomorphisms with the same image-length for every letter in $V$ produce distinct $V$-instances.
	Hence 

	\begin{eqnarray*}
		a_{dn}^1 & \geq & q^{\sum_{i=1}^k a_i} \\
		& \geq & q^{\left(\frac{dn - (k-1)N}{r} + r(k-1)\right)} \\
		& = & cq^{\left(\frac{dn}{r}\right)},
	\end{eqnarray*}
	where $c = q^{(k-1)(r^2-N)/r}$ depends on $V$ but not on $n$.	
	To overcount $b_n^2$ (and $a_{dn}^2$ by extension), we consider all ${n+1 \choose |V|+1}$ ways to partition an $n$-letter length and so determine the lengths of the images of the letters in $V$. 
	However, for letters with multiplicity strictly greater than $r$, the sum of the lengths of their images must be at least $\sqrt{n}$.
	
	\begin{eqnarray*}
		b_n^2 & \leq & {n+1 \choose |V|+1}  \sum_{i = \ceil{\sqrt{n}}}^n q^{\left(\frac{n - i}{r} + \frac{i}{r+1}\right)}\\
		& =  & {n+1 \choose |V|+1} \sum_{i = \ceil{\sqrt{n}}}^n q^{\left(\frac{n}{r} - \frac{i}{r(r+1)}\right)}\\
		& < & n^{|V|+2}  q^{\left(\frac{n}{r} - \frac{\sqrt{n}}{r(r+1)}\right)}\\
		& = & q^{\frac{n}{r}}o(1).\\
		\\
		a_{dn}^2 & \leq & b_{dn}^2\\
		& = & o(a_{dn}^1).
	\end{eqnarray*}
	
	That is, the proportion of $V$-instances of length $dn$ that cannot be expressed with $|\phi(U)| < \sqrt{dn}$ diminishes to 0 as $n$ grows.
\end{proof}
 
\section{Density of Nondoubled Words}

In Theorem \ref{main}, we showed that the density of nondoubled $V$ in long random words (over a fixed alphabet with at least two letters) does not approach 0. 
The natural follow-up question is: Does the density converge?
To answer this question, we first prove the following lemma. 
Fixing $V = TxU$ where $x$ is a nonrecurring letter in $V$, the lemma tells us that all but a diminishing proportion of $V$-instances can be obtained by some $\phi$ with $|\phi(TU)|$ negligible.

\begin{lem} \label{bulk}
	Let $V = U_0x_1U_1x_2\cdots x_rU_r$ with $r\geq 1$, where $U = U_0U_1\cdots U_r$ is doubled with $k$ distinct letters (though any particular $U_j$ may be the empty word), the $x_i$ are distinct, and no $x_i$ occurs in $U$. 
	Further, let $\Gamma$ be the $(k+r)$-letter alphabet of $V$ and let $\Sigma$ be any finite alphabet with $q\geq 2$ letters. 
	Then there exists a nondecreasing function $g(n) = o(n)$ such that, for a randomly chosen $V$-instance $W \in \Sigma^n$, there is asymptotically almost surely a homomorphism $\phi: \Gamma^* \rightarrow \Sigma^*$ with $\phi(V) = W$ and $|\phi(x_r)| > n - g(n)$.
\end{lem}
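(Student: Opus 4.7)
The plan follows Lemma \ref{base}'s double-counting framework. Let $a_n$ denote the number of $V$-instances in $\Sigma^n$, let $a_n^1$ count those admitting some $\phi$ with $|\phi(x_r)| > n - g(n)$, and set $a_n^2 = a_n - a_n^1$; the goal is $a_n^2 = o(a_n)$ for a suitable $g(n) = o(n)$, and $g(n) = \sqrt{n}$ will suffice. Restricting to the length profile with $|\phi(u_j)| = 1$ for each $u_j \in {\rm L}(U)$ and $|\phi(x_i)| = 1$ for $i < r$ yields $q^{n - |U| + k}$ distinct $V$-instances in $a_n^1$, so $a_n \ge q^{n - |U| + k}$ and $a_n/q^n$ is bounded below by a positive constant; it is thus enough to show $a_n^2 = o(q^n)$.

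When $r = 1$, Lemma \ref{base} applied to $V$ (with $d = \gcd(r_i) = 1$) immediately gives a.a.s.\ a $\phi$ with $|\phi(U)| < \sqrt{n}$ and hence $|\phi(x_1)| > n - \sqrt{n}$. For $r \ge 2$, I first upgrade Lemma \ref{base} to the claim that a.a.s.\ a random $V$-instance admits $\phi_0$ with $|\phi_0(u_j)| = 1$ for every $u_j \in {\rm L}(U)$: the $\Theta(n^{r-1})$ ``single-letter-on-$U$'' length profiles (compositions of $n - |U|$ into $r$ positive parts $|\phi(x_i)|$) each give $q^{n - |U| + k}$ distinct $V$-instances, and after accounting for the multiplicities with which each $V$-instance is covered, their union asymptotically exhausts $a_n$ provided $r \ge 2$.

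Given such a $\phi_0$, each $\phi_0(U_j)$ is a fixed string of constant length $|U_j|$. I would then construct $\phi'$ by greedily relocating $\phi_0(U_1), \ldots, \phi_0(U_{r-1})$ to their leftmost occurrences in $W$: set $p_0 = 0$, then iteratively let $p_j$ be the smallest index exceeding $p_{j-1} + |U_{j-1}|$ with $W[p_j, p_j + |U_j|] = \phi_0(U_j)$, define $\phi'(x_i) = W[p_{i-1} + |U_{i-1}|, p_i]$, and let $\phi'(x_r)$ absorb the remaining gap before $\phi_0(U_r)$. Whenever $p_{r-1} < g(n) - |U_r|$, the resulting $\phi'$ satisfies $|\phi'(x_r)| > n - g(n)$ and places $W$ in $a_n^1$.

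The main obstacle is showing that this greedy process succeeds with $p_{r-1} < g(n)$ asymptotically almost surely. Each constant-length pattern $\phi_0(U_j)$ has $\sim g(n) q^{-|U_j|}$ expected occurrences in $W[0, g(n)]$ for a uniformly random $W$, unbounded as $g(n) \to \infty$, so a Poisson or second-moment argument yields an early occurrence with probability tending to 1; transferring this from uniform random $W$ to random $V$-instance uses $a_n / q^n$ bounded below, which ensures that uniform events of vanishing probability have vanishing conditional probability on being a $V$-instance. A secondary obstacle is the upgraded Lemma \ref{base} (the $b_j = 1$ a.a.s.\ claim for $r \ge 2$), which requires a refined accounting of the $b_n$-sum that exploits the polynomial number of length profiles.
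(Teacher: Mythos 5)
Your key intermediate claim --- the ``upgraded Lemma \ref{base}'' asserting that a.a.s.\ a random $V$-instance admits some $\phi_0$ with $|\phi_0(u_j)| = 1$ for every letter $u_j$ of $U$ --- is false, and since your entire $r\ge 2$ argument (the greedy relocation of the constant-length blocks $\phi_0(U_j)$) is built on it, the proof does not go through. For a concrete counterexample take $V = aax_1aax_2$, so $U_0 = U_1 = aa$, $U_2 = \varepsilon$, $r = 2$. Any $\phi$ with $\phi(V)=W$ has prefix $\phi(a)^2$, so $|\phi(a)|=1$ forces $W[0]=W[1]$. Yet a positive proportion of $V$-instances begin $abab$ with $a\neq b$: over uniformly random $W\in\Sigma^n$, the event $\{W[0,4]=abab,\ a\ne b,\ abab$ recurs later with room for a nonempty $\phi(x_2)\}$ has probability $\Omega(1)$, while $\mathbb{P}(W$ is a $V$-instance$)$ is $O(1)$ and bounded away from $1$, so conditional on being a $V$-instance the bad event still has probability $\Omega(1)$. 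Every such $W$ is outside the union of the single-letter-on-$U$ profiles, so the greedy step cannot even begin on a non-negligible fraction of instances. Your heuristic count ($\Theta(n^{r-1})$ profiles, each yielding $q^{n-|U|+k}$ instances) does not rescue the claim, because the multiplicity with which a covered instance is hit is itself typically $\Theta(n^{r-1})$, so the union is only a constant fraction of $a_n$, not $1-o(1)$.

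The paper avoids this trap by never forcing $\phi(U)$ into a canonical short form. It proves, by induction on $i$, the proposition $\mathbf{P}_i$: a.a.s.\ some $\phi$ has $|\phi(UX_{i-1})| \le f_i(n)$ with $f_i(n)=o(n)$; the base case $\mathbf{P}_1$ is exactly Lemma \ref{base}. In the inductive step, when the chosen $\phi_Y$ makes $\phi_Y(x_i)$ too long, one inserts an extra copy of $\phi_Y(U_i)$ --- whatever string that happens to be, at whatever length --- into the middle of $\phi_Y(x_i)$, creating $\Theta(g(n)/f(n))$ distinct instances in the good set $A_n^\alpha$; the disjointness argument (Claims 1 and 2) then yields $|A_n^\beta|=o(|A_n^\alpha|)$. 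This works with the actual images under $\phi_Y$ and makes no assumption of single-letter images, which also sidesteps the conditioning subtlety in your second-moment/Poisson step, where treating $W$ as uniformly random is no longer valid once you have conditioned on the existence of a particular $\phi_0$.
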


\begin{proof}
	Let $X_i = x_1x_2\cdots x_i$ for $0 \leq i \leq r$ (so $X_0 = \varepsilon$). 
	For any word $W$, let $\Phi_W$ be the set of homomorphisms $\{\phi:  \Gamma^* \rightarrow \Sigma^* \mid \phi(V)=W\}$ that map $V$ onto $W$. 
	Define ${\bf P}_i$ to be the following proposition for $i \in [r]$:
	\begin{quotation}
		There exists a nondecreasing function $f_i(n) = o(n)$ such that, for a randomly chosen $V$-instance $W \in \Sigma^n$, there is asymptotically almost surely a homomorphism $\phi \in \Phi_W$ such that $|\phi(UX_{i-1})| \leq f_i(n)$.
	\end{quotation}
	
	The conclusion of this lemma is an immediate consequence of ${\bf P}_r$, with $g(n) = f_r(n)$, which we will prove by induction.
	Lemma \ref{base} provides the base case, with $r = 1$ and $f_1(n) = \sqrt{n}$.
	
	Let us prove the inductive step: ${\bf P}_i$ implies ${\bf P}_{i+1}$ for $i \in [r-1]$.	
	Roughly speaking, this says: If most instances of $V$ can be made with a homomorphism $\phi$ where $|\phi(UX_{i-1})|$ is negligible, then most instances of $V$ can be made with a homomorphism $\phi$ where $|\phi(UX_{i})|$ is negligible.
	
	Assume ${\bf P}_{i}$ for some $i \in [r-1]$, and set $f(n) = f_{i}(n)$. 
	Let $A_n$ be the set of $V$-instances in $\Sigma^n$ such that $|\phi(UX_{i-1})| \leq f(n)$ for some $\phi \in \Phi_W$. 
	Let $B_n$ be the set of all other $V$-instances in $\Sigma^n$. 
${\bf P}_{i}$ implies $|B_n| = o(|A_n|)$.
	
	Case 1: $U_{i} = \varepsilon$, so $x_{i}$ and $x_{i+1}$ are consecutive in $V$. 
	When $|\phi(UX_{i-1})| \leq f(n)$, we can define $\psi$ so that $\psi(x_{i}x_{i+1}) = \phi(x_{i}x_{i+1})$ and $|\psi(x_{i})|=1$; otherwise, let $\psi(y) = \phi(y)$ for $y \in\Gamma \setminus \{x_i,x_{i+1}\}$.
	Then $|\phi(UX_i)| \leq f(n)+1$ and ${\bf P}_{i+1}$ with $f_{i+1}(n) = f_i(n) + 1$.

	Case 2: $U_i \neq \varepsilon$, so $|U_i| > 0$.
	Let $g(n)$ be some nondecreasing function such that $f(n) = o(g(n))$ and $g(n) = o(n)$. 
(This will be the $f_{i+1}$ for ${\bf P}_{i+1}$.)
	Let $A_n^\alpha$ consist of $W \in A_n$ such that $|\phi(UX_{i})| \leq g(n)$ for some $\phi\in \Phi_W$. Let $A_n^\beta = A_n \setminus A_n^\alpha$. 
The objective henceforth is to show that $|A_n^\beta| = o(|A_n^\alpha|)$.
	
	For $Y \in A_n^\beta$, let $\Phi_Y^\beta$ be the set of homomorphisms $\{\phi \in \Phi_Y : |\phi(UX_{i-1})| \leq f(n)\}$ that disqualify $Y$ from being in $B_n$. 
	Hence $Y \in A_n$ implies $\Phi_Y^\beta \neq \emptyset$.
	Since $Y \not \in A_n^\alpha$, $\phi \in \Phi_Y^\beta$ implies $|\phi(UX_{i})| > g(n)$, so $|\phi(x_i)|> g(n)-f(n)$.
	Pick $\phi_Y \in \Phi_Y^\beta$ as follows:
	\begin{itemize}
		\item Primarily, minimize $|\phi(U_0x_1U_1x_2 \cdots U_{i-1}x_{i})|$;
		\item Secondarily, minimize $|\phi(U_i)|$;
		\item Tertiarily, minimize $|\phi(U_0x_1U_1x_2 \cdots U_{i-1})|$.
	\end{itemize}
	
	Roughly speaking, we have chosen $\phi_Y$ to move the image of $U_i$ as far left as possible in $Y$.
	But since $Y \not\in A_n^\alpha$, we want it further left!
	
	To suppress the details we no longer need, let $Y = Y_1\phi_Y(x_i) \phi_Y(U_i) \phi_Y(x_{i+1}) Y_2$, where $Y_1 = \phi_Y(U_0x_1U_1x_2 \cdots U_{i-1})$ and $Y_2 = \phi_Y(U_{i+1}x_{i+2} \cdots U_r)$. 
	
	Consider a word $Z \in \Gamma^n$ of the form $Y_1Z_1 \phi_Y(U_i) Z_2 \phi_Y(U_i) \phi_Y(x_{i+1}) Y_2$, where $Z_1$ is an initial string of $\phi_Y(x_{i})$ with $2f(n) \leq |Z_1| < g(n) - 2f(n)$ and $Z_2$ is a final string of $\phi_Y(x_{i})$. 
	(See Figure 1.)
	In a sense, the image of $x_i$ was too long, so we replace a leftward substring with a copy of the image of $U_i$.
	Let $C_Y$ be the set of all such $Z$ with $|Z_1|$ a multiple of $f(n)$.
	For every $Z \in C_Y$ we can see that $Z \in A_n^\alpha$, by defining $\psi \in \Phi_Z$ as follows:
	\[ \psi(y) = \left\{ \begin{array}{l l}
		Z_1 & \mbox{ if } y = x_{i}; \\
		Z_2 \phi_Y(U_i)\phi_Y(x_{i+1})& \mbox{ if } y = x_{i+1}; \\
		\phi_Y(y) & \mbox{ otherwise.}
	\end{array} \right. \]
	
	\begin{center}
		\begin{figure}[ht]
			\begin{tikzpicture}[scale=.43]
				\filldraw[fill=gray!20,draw=white](6,-1) rectangle (9,1);
				\draw(0,1)--(29,1)--(29,-1)--(0,-1)--(0,1);
				\draw(0,0)--(29,0);
				\draw(-1,.5) node {$Y=$};
				\draw(-1,-.5) node {$Z=$};
				\draw(4,1)--(4,-1.5);
				\draw(2,.5) node {$Y_1$};
				\draw(2,-.5) node {$Y_1$};
				\draw(14,1)--(14,-1);
				\draw(6,0)--(6,-1.5);
				\draw(9,0)--(9,-1.5);
				\draw(9,.5) node {$\phi_Y(x_i)$};
				\draw(5,-.5) node {$Z_1$};
				\draw(5,-1.5) node {$\psi(x_i)$};
				\draw(7.5,-.5) node {$\phi_Y(U_i)$};
				\draw(11.5,-.5) node {$Z_2$};
				\draw(17,1)--(17,-1);
				\draw(15.5,.5) node {$\phi_Y(U_i)$};
				\draw(15.5,-.5) node {$\phi_Y(U_i)$};
				\draw(23,1)--(23,-1.5);
				\draw(20,.5) node {$\phi_Y(x_{i+1})$};
				\draw(20,-.5) node {$\phi_Y(x_{i+1})$};
				\draw(14.5,-1.5) node {$\psi(x_{i+1})$};
				\draw(26,.5) node {$Y_2$};
				\draw(26,-.5) node {$Y_2$};
			\end{tikzpicture}
		
			\caption{Replacing a section of $\phi_Y(x_i)$ in $Y$ to create $Z$.}
		\end{figure}
	\end{center}
	
	
	\medskip
	Claim 1: $\displaystyle \liminf_{|Y| = n\rightarrow \infty} |C_Y| = \infty$.
	
	\medskip
	Since we want $2f(n) \leq |Z_1| < g(n) - 2f(n)$, and $g(n) - 2f(n) < |\phi_Y(x_i)| - |\phi_Y(U_i)|$, there are $g(n) - 4f(n)$ places to put the copy of $\phi_Y(U_i)$.
	To avoid any double-counting that might occur when some $Z$ and $Z'$ have their new copies of $\phi_Y(U_i)$ in overlapping locations, we further required that $f(n)$ divide $|Z_1|$. This produces the following lower bound: 
	\[ |C_Y| \geq \floor{\frac{g(n) - 4f(n)}{f(n)}} \rightarrow \infty. \]
	
	\medskip
	Claim 2: For distinct $Y, Y' \in A_n^\beta$, $C_Y \cap C_{Y'} = \emptyset$.
	
	\medskip
	To prove Claim 2, take $Y,Y' \in A_n^\beta$ with $Z \in C_Y \cap C_{Y'}$. Define $Y_1 = \phi_Y(U_0x_1U_1x_2 \cdots U_{i-1})$ and $Y_2 = \phi_Y(U_{i+1}x_{i+2} \cdots U_r)$ as before and $Y'_1 = \phi_{Y'}(U_0x_1U_1x_2 \cdots U_{i-1})$ and $Y'_2 = \phi_{Y'}(U_{i+1}x_{i+2} \cdots U_r)$. 
	Now for some $Z_1,Z'_1,Z_2,Z'_2$,
	\[ Y_1Z_1 \phi_Y(U_i) Z_2 \phi_Y(U_i) \phi_Y(x_{i+1}) Y_2 = Z = Y'_1Z'_1 \phi_{Y'}(U_i) Z'_2 \phi_{Y'}(U_i) \phi_{Y'}(x_{i+1}) Y'_2, \]
	with the following constraints:
	\begin{enumerate}
		\renewcommand{\theenumi}{\roman{enumi}}
		\item $|Y_1\phi_Y(U_i)| \leq |\phi_Y(UX_{i})| \leq f(n)$;
		\item $|Y'_1\phi_{Y'}(U_i)| \leq |\phi_{Y'}(UX_{i})| \leq f(n)$;
		\item $2f(n) \leq |Z_1| < g(n) - 2f(n)$;
		\item $2f(n) \leq |Z'_1| < g(n) - 2f(n)$;
		\item $|Z_1 \phi_Y(U_i) Z_2| = |\phi_{Y}(x_{i})| > g(n) - f(n)$;
		\item $|Z'_1 \phi_{Y'}(U_i) Z'_2| = |\phi_{Y'}(x_{i})| > g(n) - f(n)$.
	\end{enumerate}
	As a consequence:
	\begin{itemize}
		\item $|Y_1Z_1 \phi_Y(U_i)| < g(n) - f(n) < |Z'_1 \phi_{Y'}(U_i) Z'_2|$, by (i), (iii), and (vi);
		\item $|Y_1Z_1| \geq |Z_1| > 2f(n) > |Y'_1|$, by (iii) and (ii).
	\end{itemize}
	
	Therefore, the copy of $\phi_Y(U_i)$ added to $Z$ is properly within the noted occurrence of $Z'_1 \phi_{Y'}(U_i) Z'_2$ in $Z'$, which is in the place of $\phi_{Y'}(x_{i})$ in $Y'$.
	In particular, the added copy of $\phi_Y(U_i)$ in $Z$ interferes with neither $Y_1'$ nor the original copy of $\phi_{Y'}(U_i)$.
	Thus $Y_1'$ is an initial substring of $Y$ and $\phi_{Y'}(U_i) \phi_{Y'}(x_{i+1}) Y_2'$ is a final substring of $Y$.
	Likewise, $Y_1$ is an initial substring of $Y'$ and $\phi_Y(U_i) \phi_Y(x_{i+1}) Y_2$ is a final substring of $Y'$.
	By the selection process of $\phi_Y$ and $\phi_{Y'}$, we know that $Y_1 = Y'_1$ and $\phi_Y(U_i) \phi_Y(x_{i+1}) Y_2 = \phi_{Y'}(U_i) \phi_{Y'}(x_{i+1}) Y_2'$.
	Finally, since $f(n)$ divides $Z_1$ and $Z_1'$, we deduce that $Z_1 = Z_1'$. 
	Otherwise, the added copies of $\phi_Y(U_i)$ in $Z$ and of $\phi_{Y'}(U_i)$ in $Z'$ would not overlap, resulting in a contradiction to the selection of $\phi_Y$ and $\phi_{Y'}$.
	Therefore, $Y = Y'$, concluding the proof of Claim 2.
	
	
	Now $C_Y \subset A_n^\alpha$ for $Y \in A_n^\beta$. 
	Claim 1 and Claim 2 together imply that $|A_n^\beta| = o(|A_n^\alpha|)$.

\end{proof}

Observe that the choice of $\sqrt{n}$ in Lemma \ref{base} was arbitrary. 
The proof works for any function $f(n) = o(n)$ with $f(n) \rightarrow \infty$.
Therefore, where Lemma \ref{bulk} claims the existence of some $g(n) \rightarrow \infty$, the statement is in fact true for all $g(n) \rightarrow \infty$.

Let $\mathbb{I}_n(V,\Sigma)$ be the probability that a uniformly randomly selected length-$n$ $\Sigma$-word is an instance of $V$. 
That is, 
\[ \mathbb{I}_n(V,\Sigma) =\frac{|\{W \in \Sigma^n \mid \phi(V)=W \mbox{ for some homomorphism } \phi:{\rm L}(V)^* \rightarrow \Sigma^*\}|}{ |\Sigma|^{n}}. \]

\begin{fact} \label{EI}
	For any $V$ and $\Sigma$ and for $W_n \in \Sigma^n$ chosen uniformly at random, 
	\begin{eqnarray*}
		{n+1 \choose 2}\E(\delta(V,W_n)) & = & \sum_{m=1}^n (n+1-m)\E(\delta_{sur}(V,W_m))\\
		& = & \sum_{m=1}^n (n+1-m)\mathbb{I}_m(V,\Sigma).
	\end{eqnarray*}
\end{fact}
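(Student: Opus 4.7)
The plan is to derive both equalities from linearity of expectation, starting from a pointwise counting identity. For any fixed $W_n$, the quantity ${n+1 \choose 2}\delta(V,W_n)$ is by definition the number of substrings of $W_n$ that are $V$-instances, so in the paper's indexing we have
\[ {n+1 \choose 2}\delta(V,W_n) = \sum_{0 \leq i < j \leq n} \delta_{sur}(V, W_n[i,j]). \]
The next step is to group this sum by substring length $m = j - i \in \{1, \ldots, n\}$, noting that there are exactly $n+1-m$ ordered pairs $(i,j)$ with $j - i = m$.

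Taking expectations and applying linearity yields
\[ {n+1 \choose 2}\E(\delta(V,W_n)) = \sum_{m=1}^n \sum_{i=0}^{n-m} \E\left(\delta_{sur}(V, W_n[i,i+m])\right). \]
The key distributional observation is that when $W_n \in \Sigma^n$ is drawn uniformly, each fixed-position, fixed-length substring $W_n[i, i+m]$ is itself uniformly distributed in $\Sigma^m$; therefore $\E(\delta_{sur}(V, W_n[i,i+m])) = \E(\delta_{sur}(V, W_m))$, which is independent of $i$. Collecting the $n+1-m$ identical inner terms gives the first equality. The second equality is then an immediate unwinding of the definition of $\mathbb{I}_m(V,\Sigma)$: since $\delta_{sur}(V, W_m)$ is the indicator that $W_m$ is a $V$-instance, its expectation over a uniform $W_m \in \Sigma^m$ is exactly the proportion of length-$m$ $\Sigma$-words that are $V$-instances, which is the defining expression for $\mathbb{I}_m(V,\Sigma)$.

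There is essentially no real obstacle here; the statement is a bookkeeping identity whose proof reduces to linearity of expectation together with the trivial fact that a fixed substring of a uniform word is itself uniform at its ambient length. It is worth emphasizing that no joint independence among the substrings is required, which is fortunate because overlapping substrings of $W_n$ are highly dependent. The only minor care-point is that the outer sum starts at $m = 1$ because substrings in the paper's convention have length at least one; terms with $m < |V|$ vanish automatically under nonerasing homomorphisms, but this need not be invoked to prove the identity.
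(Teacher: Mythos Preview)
Your argument is correct and is exactly the natural one: expand the count of $V$-instance substrings as a sum of indicators, group by length, apply linearity of expectation, and use that a fixed-length substring of a uniform word is uniform. The paper states this result as a Fact without proof, so there is no alternative approach to compare against; your write-up supplies precisely the routine verification the paper omits.
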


Denote $\mathbb{I}(V,\Sigma) = \lim_{n\rightarrow \infty}\mathbb{I}_n(V,\Sigma)$. When does this limit exist?

\begin{theorem} \label{nondoubledProb}
	For nondoubled $V$ and alphabet $\Sigma$, $\mathbb{I}(V,\Sigma)$ exists. Moreover, $\mathbb{I}(V,\Sigma) > 0$.
\end{theorem}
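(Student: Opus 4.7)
The plan is to split into cases. If every letter of $V$ appears exactly once, then $\mathbb{I}_n(V, \Sigma) = 1$ for $n \geq |V|$ (any partition of $W$ into $|V|$ nonempty substrings witnesses it as a $V$-instance), so $\mathbb{I}(V, \Sigma) = 1 > 0$. Otherwise I will write $V = U_0 x_1 U_1 \cdots x_r U_r$ as in Lemma~\ref{bulk}, with $r \geq 1$ and $U = U_0 \cdots U_r$ a nonempty doubled word, and set $V' = U_0 x_1 U_1 \cdots x_{r-1} U_{r-1}$ and $V'' = U_r$, so that $V = V' x_r V''$. The approach is to use Lemma~\ref{bulk} to show that almost every $V$-instance is forced by a short prefix that is a $V'$-instance and a short suffix that is a $V''$-instance, and then to recognize the resulting probability as an increasing union on a fixed probability space.

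Fix any nondecreasing $g(n) \rightarrow \infty$ with $g(n) = o(n)$, and let $\mathbb{I}_n^*$ denote the probability that $W_n$ is a $V$-instance via some homomorphism $\phi$ with $|\phi(x_r)| > n - g(n)$. Lemma~\ref{bulk} (together with the remark following it on the flexibility of $g$) yields $\mathbb{I}_n^* = \mathbb{I}_n(1 - o(1))$. For $a, c \geq 0$ with $a + c \leq n - 1$, define $F_{a, c}^{(n)}$ to be the event that there exists $\phi : \Gamma^* \rightarrow \Sigma^*$ satisfying $\phi(V') = W_n[0, a]$ and $\phi(V'') = W_n[n - c, n]$; extending by $\phi(x_r) := W_n[a, n - c]$ then realizes $W_n$ as a $V$-instance. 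Hence
\[ \mathbb{I}_n^* = \Pr\left(\bigcup_{a + c < g(n)} F_{a, c}^{(n)}\right). \]

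Next I will transfer this probability to an $n$-independent space. Let $\Omega = \Sigma^{\mathbb{N}} \times \Sigma^{\mathbb{N}}$ carry the product of two independent uniform i.i.d.\ laws, and let $E_{a, c} \subseteq \Omega$ be the event that some $\phi : \Gamma^* \rightarrow \Sigma^*$ maps $V'$ to the first $a$ letters of the first coordinate and $V''$ to the first $c$ letters of the second. Because $a + c < g(n) = o(n)$, for large $n$ the two portions $W_n[0, a]$ and $W_n[n - c, n]$ involved in $F_{a, c}^{(n)}$ are disjoint and jointly uniform, so the finite-dimensional joint distributions of $(F_{a, c}^{(n)})_{a + c < g(n)}$ and $(E_{a, c})_{a + c < g(n)}$ coincide, giving $\mathbb{I}_n^* = \Pr\left(\bigcup_{a + c < g(n)} E_{a, c}\right)$. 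As $n \rightarrow \infty$ this index set grows to all pairs of nonnegative integers, and monotone convergence yields $\mathbb{I}_n^* \rightarrow L := \Pr\left(\bigcup_{a, c \geq 0} E_{a, c}\right)$. Positivity of $L$ follows by fixing any homomorphism $\psi : \Gamma^* \rightarrow \Sigma^*$ with each letter sent to a single letter of $\Sigma$: then $E_{|\psi(V')|, |\psi(V'')|}$ contains the positive-probability event that the two coordinate sequences begin with $\psi(V')$ and $\psi(V'')$. Combining $\mathbb{I}_n^* \rightarrow L$ with $\mathbb{I}_n^* = \mathbb{I}_n(1 - o(1))$ gives $\mathbb{I}_n \rightarrow L > 0$.

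The main technical obstacle I anticipate is justifying the coupling identity used in the previous paragraph. It requires checking that for every finite index set $T$ with $\max_T a + \max_T c \leq n$, the pair $(W_n[0, \max_T a], W_n[n - \max_T c, n])$ and the corresponding initial-segment pair from $\Omega$ are identically distributed as independent uniforms on $\Sigma^{\max_T a} \times \Sigma^{\max_T c}$; since each $F_{a, c}^{(n)}$ and $E_{a, c}$ is a measurable function of these segments, the joint distribution of any finite subfamily is the same in both settings, and the union probabilities coincide.
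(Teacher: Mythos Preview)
Your overall strategy---use Lemma~\ref{bulk} to localize the instance to a short prefix and a short suffix, then realize the resulting probability as a monotone limit on an $n$-free probability space---is sound and is genuinely different from the paper's argument, which instead proves $a_{n+1}\ge q\,a_n$ by an explicit letter-insertion injection. However, the coupling you wrote down is incorrect, and the error is not cosmetic: your limit $L$ is not $\mathbb{I}(V,\Sigma)$.

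The issue is the alignment of the suffix. Your event $F_{a,c}^{(n)}$ depends on the \emph{last} $c$ letters of $W_n$, while $E_{a,c}$ depends on the \emph{first} $c$ letters of $\omega_2$. When you vary $c$ within a finite index set $T$, the suffix $W_n[n-c,n]$ is the last $c$ letters of the block $W_n[n-\max_T c,\,n]$, whereas $\omega_2[0,c]$ is the first $c$ letters of $\omega_2[0,\max_T c]$. Thus $F_{a,c}^{(n)}$ and $E_{a,c}$ are different measurable functions of the pair $(\text{prefix block},\text{suffix block})$, and the joint laws need not agree. Concretely, take $V=yxy$ (so $V'=y$, $V''=y$) and $|\Sigma|=2$. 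Then $E_{a,c}$ is the event $\omega_1[0,a]=\omega_2[0,c]$, and since $E_{a,a}\subseteq E_{1,1}$ for every $a\ge 1$, your limit is $L=\Pr(E_{1,1})=\tfrac12$; but $\mathbb{I}(yxy,\Sigma)\approx 0.7322$ as recorded in the paper. Even the finite unions already disagree: $\Pr(F_{1,1}^{(n)}\cup F_{2,2}^{(n)})=\tfrac58$ for $n\ge 4$, while $\Pr(E_{1,1}\cup E_{2,2})=\tfrac12$.

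The repair is routine once you see it: index the second stream by the nonpositive integers (or, equivalently, feed the reversed suffix into $\omega_2$), defining $E_{a,c}$ via $\phi(V'')=\omega_2[-c,0]$. Then the coupling $\omega_1[j]=W_n[j]$, $\omega_2[-j]=W_n[n-j]$ makes $F_{a,c}^{(n)}$ and $E_{a,c}$ the \emph{same} function of the $(\max_T a,\max_T c)$-blocks, your monotone-convergence argument goes through, and the limit $L$ is the correct one. You should also add the one-line case $|\Sigma|=1$, since Lemma~\ref{bulk} assumes $q\ge 2$.
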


\begin{proof}
	If $|\Sigma|=1$, then $\mathbb{I}_n(V,\Sigma)=1$ for $n \geq |V|$.
	
	Assume $|\Sigma| = q \geq 2$.
	Let $V = TxU$ where $x$ is the right-most nonrecurring letter in $V$. Let $\Gamma = {\rm L}(V)$ be the alphabet of letters in $V$. 
	By Lemma \ref{bulk}, there is a nondecreasing function $g(n) = o(n)$ such that, for a randomly chosen $V$-instance $W \in \Sigma^n$, there is asymptotically almost surely a homomorphism $\phi: \Gamma^* \rightarrow \Sigma^*$ with $\phi(V) = W$ and $|\phi(x_r)| > n - g(n)$.
	
	Let $a_n$ be the number of $W \in \Sigma^n$ such that there exists $\phi: \Gamma^* \rightarrow \Sigma^*$ with $\phi(V) = W$ and $|\phi(x_r)| > n - g(n)$.
	Lemma \ref{bulk} tells us that $\frac{a_n}{q^n} \sim \mathbb{I}_n(V,\Sigma)$.
	Note that $\frac{a_n}{q^n}$ is bounded. 
	It suffices to show that $a_{n+1} \geq qa_n$ for sufficiently large $n$.
	Pick $n$ so that $g(n) < \frac{n}{3}$.

	For length-$n$ $V$-instance $W$ counted by $a_n$, let $\phi_W$ be a homomorphism that maximizing $|\phi_W(x_r)|$ and, of such, minimizes $|\phi_W(T)|$.
	For each $\phi_W$ and each $a \in \Sigma$, let $\phi_W^a$ be the function such that, if $\phi_W(x_r) = AB$ with $|A|  = \floor{|\phi_W(x_r)|/2}$, then $\phi_W^a(x) = AaB$; $\phi_W^a(y) = \phi_W(y)$ for each $y \in  \Gamma \setminus\{x\}$
	Roughly speaking, we are inserting $a$ into the middle of the image of $x$.
	
	Suppose we are double-counting, so $\phi_W^a(V) = \phi_Y^b(V)$.
	As 
	\[ |\phi_W(x_r)|/2 > (n - g(n))/2 > n/3 > g(n) \geq |\phi_Y(TU)| \]
	 and vice-versa, the inserted $a$ (resp., $b$) of one map does not appear in the image of $TU$ under the other map. 
	So $\phi_W(T)$ is an initial string and $\phi_W(U)$ a final string of $\phi_Y(V)$, and vice-versa.
	By the selection criteria of $\phi_W$ and $\phi_Y$, $|\phi_W(T)| = |\phi_Y(T)|$ and $|\phi_W(U)| = |\phi_Y(U)|$. 
	Therefore the location of the added $a$ in $\phi_W^a(V)$ and the added $b$ in $\phi_W^b(V)$ are the same.
	Hence, $a = b$ and $W = Y$.

	Moreover $\mathbb{I}(V,\Sigma) \geq q^{-||V||} > 0$.
\end{proof}

\begin{ex}
    Let $V = x_1x_2\cdots x_k$ have k distinct letters. 
    Since every word of length at least k is a $V$-instance, $\mathbb{I}(V,\Sigma)=1$ for every alphabet $\Sigma$.
    When even one letter in $V$ is repeated, finding $\mathbb{I}(V,\Sigma)$ becomes a nontrivial task.
\end{ex}

\begin{ex}
    Zimin's classification of unavoidable words is as follows \cite{Z-82,Z-84}: Every unavoidable word with $n$ distinct letters is encountered by $Z_n$, where $Z_0 = \varepsilon$ and $Z_{i+1} = Z_ix_{i+1}Z_i$ with $x_{i+1}$ a letter not occurring in $Z_i$.
    For example, $Z_2 = aba$ and $Z_3 = abacaba$.
    The authors can calculate $\mathbb{I}(Z_2,\Sigma)$ and $\mathbb{I}(Z_3,\Sigma)$ to arbitrary precision \cite{CR-16}.
    \begin{table}[h]
    	\caption{$\mathbb{I}(Z_2,\Sigma)$ and $\mathbb{I}(Z_3,\Sigma)$ computed to 7 decimal places.}
    	\medskip
    	\begin{tabular}{c | c | c | c | c | c | c | c | }
    		$|\Sigma|$ & 2 & 3 & 4 & 5 & 6 & 7 & $\cdots$ \\ \hline
        	$\mathbb{I}(Z_2,\Sigma)$ & 0.7322132 & 0.4430202 & 0.3122520 & 0.2399355 & 0.1944229 & 0.1632568 & $\cdots$\\ \hline
    		$\mathbb{I}(Z_3,\Sigma)$ & 0.1194437 & 0.0183514 & 0.0051925 & 0.0019974 & 0.0009253 & 0.0004857 & $\cdots$\\
    	\end{tabular}
	\end{table}
\end{ex}
    
\begin{cor}
	Let $V$ be a nondoubled word on any alphabet. 
	Fix an alphabet $\Sigma$, and let $W_n \in \Sigma^n$ be chosen uniformly at random. 
	Then 
	\[ \lim_{n\rightarrow \infty}\E(\delta(V,W_n)) = \mathbb{I}(V,\Sigma). \]
\end{cor}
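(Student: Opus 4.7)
The plan is to combine Fact \ref{EI} with Theorem \ref{nondoubledProb} via a weighted Ces\`aro argument. Since $V$ is nondoubled, Theorem \ref{nondoubledProb} guarantees that $L := \mathbb{I}(V,\Sigma) = \lim_{m\to\infty} \mathbb{I}_m(V,\Sigma)$ exists and is positive. Fact \ref{EI} rewrites the expected density as the weighted average
\[ \E(\delta(V,W_n)) = \frac{1}{{n+1 \choose 2}} \sum_{m=1}^n (n+1-m)\mathbb{I}_m(V,\Sigma), \]
and since $\sum_{m=1}^n (n+1-m) = {n+1 \choose 2}$, the coefficients $w_{m,n} := (n+1-m)/{n+1 \choose 2}$ do indeed form a probability distribution on $\{1,\ldots,n\}$. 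So the corollary amounts to verifying that averaging a convergent sequence against these ``triangular'' weights preserves its limit.

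The execution is routine. Given $\epsilon > 0$, choose $M$ so that $|\mathbb{I}_m(V,\Sigma) - L| < \epsilon$ for all $m \geq M$. Split
\[ \E(\delta(V,W_n)) - L = \frac{1}{{n+1 \choose 2}} \sum_{m=1}^{M-1} (n+1-m)\bigl(\mathbb{I}_m(V,\Sigma) - L\bigr) + \frac{1}{{n+1 \choose 2}} \sum_{m=M}^{n} (n+1-m)\bigl(\mathbb{I}_m(V,\Sigma) - L\bigr). \]
Using $|\mathbb{I}_m(V,\Sigma) - L| \leq 1$, the first (initial) sum is bounded in absolute value by $M(n+1)/{n+1 \choose 2} = O(M/n) \to 0$. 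The second (tail) sum is bounded by $\epsilon \sum_{m=M}^n (n+1-m)/{n+1 \choose 2} \leq \epsilon$. Since $\epsilon$ was arbitrary, $\E(\delta(V,W_n)) \to L$.

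There is no real obstacle here: the heavy lifting was carried out in Lemma \ref{bulk} and Theorem \ref{nondoubledProb}, which established the convergence of $\mathbb{I}_m(V,\Sigma)$ itself. The corollary is just a bookkeeping step that transfers that convergence from the surjective density $\delta_{sur}$ to the full density $\delta$ via Fact \ref{EI}.
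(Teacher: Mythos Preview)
Your proof is correct and follows essentially the same approach as the paper: both invoke Fact~\ref{EI} to express $\E(\delta(V,W_n))$ as a triangular weighted average of the $\mathbb{I}_m(V,\Sigma)$, appeal to Theorem~\ref{nondoubledProb} for convergence of $\mathbb{I}_m(V,\Sigma)$, and then run a standard Ces\`aro-type $\epsilon$-splitting argument into a head (finitely many terms, contribution $O(1/n)$) and a tail (each term within $\epsilon$ of the limit). The only difference is cosmetic---the paper combines both bounds into a single inequality below $\epsilon$, while you let the head tend to $0$ and bound the tail by $\epsilon$ separately.
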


\begin{proof}
	Let $\mathbb{I} = \mathbb{I}(V,\Sigma)$ and $\epsilon > 0$. 
	Pick $N = N_\epsilon$ sufficiently large so $|\mathbb{I} - \mathbb{I}_n(V,\Sigma)| < \frac{\epsilon}{2}$ when $n > N$. 
	Applying Fact \ref{EI} for $n > \max(N,4N/\epsilon)$,
	
	\begin{eqnarray*}
		|\mathbb{I} - \E(\delta(V,W_n))|& = & \left|\mathbb{I}\frac{1}{{n+1 \choose 2}}  \sum_{m=1}^n (n+1-m) - \frac{1}{{n+1 \choose 2}}  \sum_{m=1}^n (n+1-m)\mathbb{I}_m(V,\Sigma)\right|\\
		& \leq & \frac{1}{{n+1 \choose 2}}  \sum_{m=1}^n (n+1-m)|\mathbb{I} - \mathbb{I}_m(V,\Sigma)|\\
		& = & \frac{1}{{n+1 \choose 2}}  \left[\sum_{m=1}^N + \sum_{m = N+1}^n\right] (n+1-m)|\mathbb{I} - \mathbb{I}_m(V,\Sigma)|\\
		& < & \frac{1}{{n+1 \choose 2}}  \left[\sum_{m=1}^{\floor{\epsilon n / 4}} (n+1-m)1 + \sum_{m = N+1}^n (n+1-m)\frac{\epsilon}{2} \right]\\
		& < & \frac{1}{{n+1 \choose 2}}  \left[\frac{\epsilon n}{4}n + {n+1 \choose 2}\frac{\epsilon}{2} \right]\\
		& < & \epsilon.
	\end{eqnarray*}

\end{proof}

\section{Concentration}

For doubled $V$ and $|\Sigma|>1$, we established that the expectation of the density $\delta(V,W_n)$ converges to zero. 
In particular, we know the following.

\begin{prn} \label{expectation}
	Let $V$ be a doubled word, $\Sigma$ an alphabet with $q \geq 2$ letters, and $W_n \in \Sigma^n$ chosen uniformly at random. 
	Then \[ \E(\delta(V,W_n)) \sim \frac{1}{n}. \]
\end{prn}

\begin{proof}
	In the proof of Theorem \ref{main}, we showed that 
	\[ \E(\delta(V,W_n)) \leq \frac{\left(\sum_{j = k}^{\infty} {j+1 \choose k+1} q^{-j}\right)(n-|V|+1)}{{n+1 \choose 2}} = O(n^{-1}) . \]
	The lower bound follows from an observation made in the Background section: ``the event that $W_n[b|V|,(b+1)|V|]$ is an instance of $V$ has nonzero probability and is independent for distinct $b$.''
	Hence \[ \E(\delta(V,W_n)) \geq \frac{ 1}{{n + 1 \choose 2}}\floor{\frac{n}{|V|}}\mathbb{I}_{|V|}(V,\Sigma) = \Omega(n^{-1}). \]
\end{proof}


To bound variance and other higher order moments, we observe the following upper bound on $q^n \mathbb{I}_n(V,\Sigma)$. 
Hencefore, if ${x \choose y}$ is used with nonintegral $x$, we mean \[ {x \choose y} = \frac{\prod_{i = 0}^{y-1} (x-i)}{y!}. \]

\begin{lem} \label{instanceCount}
	Let $V$ be a doubled word with exactly $k$ letters and $\Sigma$ an alphabet with $q \geq 2$ letters. 
	Moreover, let ${\rm L}(V) = \{x_1, \ldots, x_k\}$ with $r_i$ be the multiplicity of $x_i$ in $V$ for each $i \in [k]$,  $d = \gcd_{i \in [k]}(r_i)$, and $r = \min_{i \in [k]}(r_i)$.
Then,

\[ \mathbb{I}_{n}(V,\Sigma) \leq {n/d+k+1 \choose k+1} q^{n(1-r)/r}. \]
\end{lem}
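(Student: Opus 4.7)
The plan is to overcount $V$-instances in $\Sigma^n$ by counting nonerasing homomorphisms $\phi:\Gamma^* \to \Sigma^*$ with $|\phi(V)| = n$, organized by the tuple of image lengths $(\ell_1, \ldots, \ell_k)$ where $\ell_i := |\phi(x_i)|$. Each such $\phi$ is specified by choosing a word in $\Sigma^{\ell_i}$ for each $i$, yielding $q^{\sum_i \ell_i}$ homomorphisms per length tuple; and the admissible length tuples are exactly the positive integer solutions to $\sum_{i=1}^k r_i \ell_i = n$. Thus the number of length-$n$ $V$-instances is at most $\sum q^{\sum_i \ell_i}$, summed over these length tuples.

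First I would bound the exponent uniformly: since $r_i \geq r$ for every $i$, we have $r \sum_i \ell_i \leq \sum_i r_i \ell_i = n$, hence $\sum_i \ell_i \leq n/r$ and $q^{\sum_i \ell_i} \leq q^{n/r}$. Next I would bound the number of admissible length tuples. Writing $r_i = d s_i$ with each $s_i$ a positive integer, the constraint $\sum r_i \ell_i = n$ becomes $\sum s_i \ell_i = n/d$; since each $s_i \geq 1$, every solution satisfies $\sum \ell_i \leq n/d$. The number of nonnegative integer tuples $(\ell_1, \ldots, \ell_k)$ with $\sum \ell_i \leq n/d$ is ${n/d + k \choose k}$, which is at most ${n/d + k + 1 \choose k+1}$. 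If $d \nmid n$ then there are no admissible tuples at all and the bound is trivial, so the polynomial convention for ${x \choose y}$ recalled just before the lemma is only a notational convenience.

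Combining, the number of length-$n$ $V$-instances is at most ${n/d + k + 1 \choose k+1} q^{n/r}$, and dividing by $|\Sigma^n| = q^n$ will give
\[ \mathbb{I}_n(V,\Sigma) \leq {n/d+k+1 \choose k+1} q^{n/r - n} = {n/d+k+1 \choose k+1} q^{n(1-r)/r}, \]
as required. This is essentially a clean counting estimate, so I do not anticipate a genuine obstacle; the only mild subtlety is that we deliberately overcount instances by counting homomorphisms (distinct homomorphisms can produce the same word in $\Sigma^n$), but since we only need an upper bound that overcount is harmless. The slight slack between ${n/d + k \choose k}$ and ${n/d + k + 1 \choose k+1}$ is absorbed into the stated bound.
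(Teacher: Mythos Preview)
Your proof is correct and follows essentially the same approach as the paper: overcount instances by homomorphisms, bound $q^{\sum_i \ell_i}$ via $\sum_i \ell_i \le n/r$, and bound the number of admissible length tuples by ${n/d+k+1 \choose k+1}$. The only cosmetic difference is in that last step: the paper bounds the number of tuples by mapping each $(\ell_1,\ldots,\ell_k)$ to the partial sums $b_j=\frac{1}{d}\sum_{i\le j} r_i\ell_i$ (a strictly increasing $k$-tuple ending at $n/d$), whereas you use the cruder but equally valid observation $\sum_i \ell_i \le n/d$ together with stars-and-bars; both routes land inside the stated binomial bound.
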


\begin{proof}
	Let $a_n(\overline{r})$ be the number of $k$-tuples $\overline{a} = (a_1, \cdots, a_k) \in (\Z^+)^k$ so that $\sum_{i=1}^{k} a_ir_i = n$. 
	Then $a_{n}(\overline{r}) \leq {n/d+k +1 \choose k +1 }$. 
	Indeed, if $d \! \not | \; n$, then $a_{n}(\overline{r}) = 0$. 
	Otherwise, for each $\overline{a}$ counted by $a_{n}(\overline{r})$, there is a unique corresponding $\overline{b}\in (\Z^+)^k$ such that $1 \leq b_1 < b_2 < \cdots < b_k = n/d$ and $b_j = \frac{1}{d} \sum_{i = 1}^{j} a_ir_i$. 
	The number of strictly increasing $k$-tuples of positive integers with largest value $n/d$ is ${n/d+k+1 \choose k+1}$.
	Let $W_n \in \Sigma^n$ chosen uniformly at random. 
	Note that $q^n \mathbb{I}_n(V,\Sigma)$ is the number of instances of $V$ in $\Sigma^n$. 
Thus,

\[ q^{n}\mathbb{I}_{n}(V,\Sigma) \leq  \E(\hom(V,W_{n})) < {n/d+k+1 \choose k+1} q^{n/r}. \]
\end{proof}


We obtain nontrivial concentration around the mean using covariance and the fact that most ``short'' substrings in a word do not overlap. 

\begin{theorem} \label{var}
	Let $V$ be a doubled word with $k$ distinct letters, $\Sigma$ an alphabet with $q \geq 2$ letters, and $W_n \in \Sigma^n$ chosen uniformly at random. 
	\[ {\rm Var}(\delta(V,W_n)) =O\left(\E(\delta(V,W_n))^2 \frac{(\log n)^3}{n}\right). \]
\end{theorem}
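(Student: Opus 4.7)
The plan is to write $S = {n+1 \choose 2}\delta(V,W_n) = \sum_{0 \leq i < j \leq n} X_{i,j}$, where $X_{i,j}$ is the indicator of the event that $W_n[i,j]$ is a $V$-instance. Then ${\rm Var}(\delta(V,W_n)) = {\rm Var}(S) / {n+1 \choose 2}^2$, and by Proposition \ref{expectation}, $\E(\delta(V,W_n))^2 = \Theta(n^{-2})$. The theorem thus reduces to showing ${\rm Var}(S) = O(n(\log n)^3)$.

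To bound ${\rm Var}(S)$, I would expand ${\rm Var}(S) = \sum_{I, I'} {\rm Cov}(X_I, X_{I'})$ and exploit the fact that $X_I$ depends only on the letters of $W_n$ indexed by $I$: whenever $I \cap I' = \emptyset$, the variables $X_I$ and $X_{I'}$ are independent and the covariance vanishes. On overlapping pairs I would use the crude estimate ${\rm Cov}(X_I, X_{I'}) \leq \E(X_I X_{I'}) \leq \min(\mathbb{I}_m(V,\Sigma), \mathbb{I}_{m'}(V,\Sigma))$, where $m = |I|$ and $m' = |I'|$, available because the joint indicator is dominated by either marginal indicator.

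Lemma \ref{instanceCount} yields the exponential decay $\mathbb{I}_\ell(V,\Sigma) = O(\ell^{k+1} q^{-(r-1)\ell/r})$ with $r \geq 2$, so I would pick a threshold $L = \Theta(\log n)$ satisfying $\sum_{\ell > L} \mathbb{I}_\ell(V,\Sigma) = O(n^{-3})$, and split the covariance sum by whether at least one interval is \emph{long} (length exceeding $L$) or both are \emph{short} (length at most $L$). For long pairs, each interval overlaps at most ${n+1 \choose 2} = O(n^2)$ others, and $\sum_{I\text{ long}} \E(X_I) \leq n \sum_{\ell > L} \mathbb{I}_\ell(V,\Sigma) = O(n^{-2})$, so this piece contributes $O(n^2 \cdot n^{-2}) = O(1)$. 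For short pairs, parameterize by $(m, m', i, i')$ with $m, m' \leq L$; for each fixed $(m, m', i)$, the overlap condition forces $|i-i'| \leq L$, leaving $O(L)$ choices of $i'$. This gives $O(nL^3)$ short pairs, each contributing at most $\min(\mathbb{I}_m, \mathbb{I}_{m'}) \leq 1$ to the sum, for a total of $O(n(\log n)^3)$. Combining the two regimes, ${\rm Var}(S) = O(n(\log n)^3)$.

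The main obstacle is the threshold calibration: $L$ must be logarithmic in $n$ to keep the short-pair enumeration polylogarithmic, but large enough that the long tail is truly negligible against the quadratic overlap count. The exponential decay in Lemma \ref{instanceCount} is what makes $L = O(\log n)$ viable. A sharper analysis using the complementary bound $\E(X_I X_{I'}) \leq q^o \mathbb{I}_m(V,\Sigma)\, \mathbb{I}_{m'}(V,\Sigma)$ (where $o$ is the overlap length, obtained by counting pairs of compatible $V$-instances on $I \cup I'$ and dividing by $q^{m+m'-o}$) can shave logarithmic factors from the short-pair contribution, but the crude estimate above already suffices for the claimed bound.
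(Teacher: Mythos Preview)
Your proposal is correct and follows essentially the same route as the paper: expand the variance of $S=\binom{n+1}{2}\delta(V,W_n)$ as a sum of covariances of the indicators $X_{a,b}$, drop non-overlapping pairs by independence, bound each surviving covariance by $\min(\mathbb{I}_{b-a},\mathbb{I}_{d-c})$, and split at a logarithmic threshold so that the short--short pairs are counted (yielding $O(nL^3)$) while the long tail is killed by the exponential decay from Lemma~\ref{instanceCount}. The paper chooses the explicit threshold $s(n)=\frac{2(k+5)}{\log q}\log n$ and counts short overlapping pairs via the observation that a length-$i$ interval meets fewer than $\binom{3i}{2}$ intervals of length at most $i$, but this is the same $O(nL^3)$ enumeration you describe.
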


\begin{proof}
	Let $X_n = {n+1 \choose 2}\delta(V,W_n)$ be the random variable counting the number of substrings of $W_n$ that are $V$-instances. 
	For fixed $n$, let $X_{a,b}$ be the indicator variable for the event that $W_n[a,b]$ is a $V$-instance, so $X_n = \sum_{a=0}^{n-1} \sum_{b=a+1}^{n} X_{a,b}$. 
	Let $(a,b) \sim (c,d)$ denote that $[a,b]$ and $[c,d]$ overlap.
	Note that 
	\begin{eqnarray*}
		{\rm Cov}(X_{a,b},X_{c,d})& \leq & \E(X_{a,b}X_{c,d})\\
		& \leq & \min(\E(X_{a,b}),\E(X_{c,d})) \\
		& = & \min(\mathbb{I}_{(b-a)}(V,\Sigma),\mathbb{I}_{(b-a)}(V,\Sigma))\\
		& \leq & {i/d+k+1 \choose k+1}q^{i(1-r)/r},
	\end{eqnarray*}
	for $i \in \{b-a,d-c\}$. 
	For $i < n/3$, the number of intervals in $W_n$ of length at most $i$ that overlap a fixed interval of length $i$ is less than ${3i \choose 2}$.
	Define the following function on $n$, which acts as a threshold for ``short'' substrings of a random length-$n$ word: 
	\[ s(n) = -2 \log_q(n^{-(k+5)}) = t \log n, \]
	 where $t = \frac{2(k+5)}{\log(q)} > 0$.
	For sufficiently large $n$,

    \begin{eqnarray*}
    	{\rm Var}(X_n) & = & \sum_{\substack{0 \leq a < b \leq n \\ 0 \leq c < d \leq n}} {\rm Cov}(X_{a,b},X_{c,d})\\
    	& \leq & \sum_{(a,b) \sim (c,d)} \min(\mathbb{I}_{(b-a)}(V,\Sigma),\mathbb{I}_{(b-a)}(V,\Sigma))  \\
    	& = & \left[ \sum_{\substack{(a,b) \sim (c,d) \\ b-a,d-c \leq s(n)}} + \sum_{\substack{(a,b) \sim (c,d) \\ else}} \right] \min(\mathbb{I}_{(b-a)}(V,\Sigma),\mathbb{I}_{(b-a)}(V,\Sigma)) \\
    	& < & 2\sum_{i = 1}^{\floor{s(n)}}(n+1-i){3i \choose 2}\cdot 1 \\
    	& &  + \sum_{i = \ceil{s(n)}}^{n} (n+1-i) {n+1 \choose 2}\cdot {i/d+k+1 \choose k+1}q^{i(1-r)/r} \\
    	& < &2s(n)n(3s(n))^2 + nnn^2n^{k+1}q^{s(n)(1-r)/r}\\
    	& = &18(t \log n)^3n + n^{5+k} q^{\log_q\left(n^{-(k+5)}\right)}\\\
    	& = & O(n(\log n)^3).\\
    \end{eqnarray*}
    
    Since $\E(\delta(V,W_n)) = \Omega(n^{-1})$ by Corollary \ref{expectation},
    
    \begin{eqnarray*}
    	{\rm Var}(\delta(V,W_n)) & = & {\rm Var}\left(\frac{X_n}{{n+1 \choose 2}}\right)\\
    	& = & \frac{{\rm Var}(X_n)}{{n+1 \choose 2}^2}\\
    	& = & O\left(\frac{(\log n)^3}{n^3}\right)\\
    	& = &O\left(\E(\delta(V,W_n))^2 \frac{(\log n)^3}{n}\right).
    \end{eqnarray*}

\end{proof}

\begin{lem} \label{prob}
	Let $V$ be a word with $k$ distinct letters, each occurring at least $r \in \Z^+$ times. 
	Let $\Sigma$ be a $q$-letter alphabet and $W_n \in \Sigma^n$ chosen uniformly at random.
	Recall that ${n+1 \choose 2}\delta(V,W_n)$ is the number substrings of $W_n$ that are $V$-instances.
	Then for any nondecreasing function $f(n) > 0$, 
	\[ \mathbb{P}\left({n+1 \choose 2}\delta(V,W_n) > n\cdot f(n) \right) < n^{k+3}q^{f(n)(1-r)/r}. \]
\end{lem}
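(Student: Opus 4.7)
My approach is a length-stratified union bound. The key observation is that if the number of $V$-instance substrings in $W_n$ exceeds $n f(n)$, then at least one such substring must be long. Indeed, the total number of substrings of $W_n$ of length at most $\lfloor f(n) \rfloor$ equals $\sum_{m=1}^{\lfloor f(n)\rfloor}(n+1-m)$, which is at most $n \lfloor f(n) \rfloor \leq n f(n)$. So if ${n+1 \choose 2}\delta(V,W_n) > n f(n)$, some substring of length strictly greater than $f(n)$ must be a $V$-instance. Thus the event in question is contained in the event that at least one substring of $W_n$ of length $m \geq \lceil f(n)\rceil$ is a $V$-instance.

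Next I would apply the union bound over all positions and all lengths $m \geq \lceil f(n)\rceil$. A fixed-position length-$m$ substring of the uniformly random $W_n \in \Sigma^n$ is itself uniform on $\Sigma^m$, so the probability that it is a $V$-instance is exactly $\mathbb{I}_m(V,\Sigma)$. There are $n+1-m$ starting positions for each length $m$, giving
\[ \mathbb{P}\!\left({n+1 \choose 2}\delta(V,W_n) > n f(n)\right) \leq \sum_{m = \lceil f(n)\rceil}^{n}(n+1-m)\, \mathbb{I}_m(V,\Sigma). \]
Now I would invoke Lemma \ref{instanceCount}. Write $r^*$ for the true minimum multiplicity of letters in $V$, so by hypothesis $r^* \geq r$. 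The map $x \mapsto (1-x)/x$ is nonincreasing on $[1,\infty)$ with nonpositive values, and $q > 1$, so $q^{m(1-r^*)/r^*} \leq q^{m(1-r)/r}$. Lemma \ref{instanceCount} then gives $\mathbb{I}_m(V,\Sigma) \leq {m/d + k+1 \choose k+1}q^{m(1-r)/r}$, and since $m \geq f(n)$ and the exponent is nonpositive, $q^{m(1-r)/r} \leq q^{f(n)(1-r)/r}$, which factors out of the sum.

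All that remains is to bound the polynomial sum $\sum_{m=\lceil f(n)\rceil}^{n}(n+1-m){m/d + k+1 \choose k+1}$ by $n^{k+3}$. Replacing $(n+1-m)$ with $n$ and using that ${m/d + k+1 \choose k+1} \leq {m + k+1 \choose k+1}$ by monotonicity (since $d \geq 1$), the sum is at most $n\bigl({n+k+2 \choose k+2} - 1\bigr) < n^{k+3}$ for sufficiently large $n$; for small $n$ the claimed bound $n^{k+3} q^{f(n)(1-r)/r}$ can exceed $1$, making the inequality trivial. The only real obstacle is the bookkeeping: confirming that the minimum multiplicity $r^*$ of $V$ may be replaced by the weaker hypothesis $r$ without loss, and verifying that the crude polynomial estimates truly collapse into $n^{k+3}$ rather than something larger. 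Neither step is deep.
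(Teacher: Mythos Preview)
Your proposal is correct and is essentially the same argument as the paper's: both observe that there are at most $n f(n)$ substrings of length $\le f(n)$, so exceeding $n f(n)$ instances forces a long $V$-instance, and then both apply a union bound over long substrings together with Lemma~\ref{instanceCount} to get the $n^{k+3}q^{f(n)(1-r)/r}$ bound. Your extra remarks about replacing the true minimum multiplicity $r^*$ by the hypothesized $r$ and about the small-$n$ case are sound bits of bookkeeping that the paper leaves implicit.
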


\begin{proof}
	Lemma~\ref{instanceCount} gives a bound on the probability that randomly chosen $W_n \in \Sigma^n$ is a $V$-instance: 
	\[ \mathbb{P}(\delta_{sur}(V,W_n) = 1) = \mathbb{I}_{n}(V,\Sigma) \leq {n/d+k+1 \choose k+1}q^{n(1-r)/r}.\] 

    Since $\delta_{sur}(V,W) \in \{0,1\}$,
	\begin{eqnarray*}
	    \sum_{m=1}^{\floor{f(n)}} \sum_{\ell=0}^{n-m}\delta_{sur}(V,W_n[\ell,\ell+m]) & < & n \cdot f(n).
	\end{eqnarray*}
	Therefore,
	\begin{eqnarray*}
		\mathbb{P}\left({n+1 \choose 2}\delta(V,W_n) > n\cdot f(n) \right) & = & \mathbb{P}\left(\sum_{m=1}^n \sum_{\ell=0}^{n-m}\delta_{sur}(V,W_n[\ell,\ell+m]) > n\cdot f(n) \right) \\
		& < & \mathbb{P}\left(\sum_{m=\ceil{f(n)}}^n \sum_{\ell=0}^{n-m}\delta_{sur}(V,W_n[\ell,\ell+m])> 0 \right) \\
		& < & \sum_{m=\ceil{f(n)}}^n \sum_{\ell=0}^{n-m}\mathbb{P}\left(\delta_{sur}(V,W_n[\ell,\ell+m])> 0 \right) \\
		& = & \sum_{m=\ceil{f(n)}}^n (n-m+1) \mathbb{P}\left(\delta_{sur}(V,W_m) = 1 \right) \\
		& \leq & \sum_{m=\ceil{f(n)}}^n (n-m+1){m/d+k+1 \choose k+1} q^{m(1-r)/r}\\
		& < & n(n-m+1) {n/d+k+1\choose k+1} q^{f(n)(1-r)/r}\\
		& < & n^{k+3}q^{f(n)(1-r)/r}.
	\end{eqnarray*}
\end{proof}

\begin{theorem} \label{moments}
	Let $V$ be a doubled word, $\Sigma$ an alphabet with $q \geq 2$ letters, and $W_n \in \Sigma^n$ chosen uniformly at random. 
	Then the $p^{th}$ raw moment and the $p^{th}$ central moment of $\delta(V,W_n)$ are both $O\left(\left(\log(n)/n\right)^p\right)$.
\end{theorem}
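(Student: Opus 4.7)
The plan is to deduce the moment bounds from the tail estimate in Lemma \ref{prob} together with the expectation bound in Proposition \ref{expectation}. Since $V$ is doubled, every letter of $V$ has multiplicity at least $r \geq 2$, which is what makes Lemma \ref{prob} useful: the quantity $q^{f(n)(1-r)/r}$ decays at least like $q^{-f(n)/2}$.

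For the raw moment, I would first pick a threshold $\tau_n = c \log n / n$ for a sufficiently large constant $c$, and split
\[ \E\!\left(\delta(V,W_n)^p\right) = \E\!\left(\delta(V,W_n)^p \mathbf{1}_{\delta(V,W_n) \leq \tau_n}\right) + \E\!\left(\delta(V,W_n)^p \mathbf{1}_{\delta(V,W_n) > \tau_n}\right). \]
The first term is trivially at most $\tau_n^p = O((\log n / n)^p)$. For the second, I would use $\delta(V,W_n) \leq 1$ to bound it by $\mathbb{P}(\delta(V,W_n) > \tau_n)$, then rewrite the event as $\binom{n+1}{2}\delta(V,W_n) > n f(n)$ for an appropriate $f(n) = \Theta(\log n)$ and apply Lemma \ref{prob}. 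Because $r \geq 2$, the right-hand side of Lemma \ref{prob} is $n^{k+3} q^{f(n)(1-r)/r} \leq n^{k+3} n^{-c \log(q)/2}$, which by choosing $c$ large can be made $O(n^{-A})$ for any constant $A > p$. Hence the tail contribution is swallowed by $(\log n/n)^p$, giving $\E(\delta(V,W_n)^p) = O((\log n/n)^p)$.

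For the central moment, I would use the elementary inequality $|a - b|^p \leq 2^{p-1}(a^p + b^p)$ with $a = \delta(V,W_n)$ and $b = \E(\delta(V,W_n))$. Taking expectations gives
\[ \E\!\left(\left|\delta(V,W_n) - \E(\delta(V,W_n))\right|^p\right) \leq 2^{p-1}\left(\E(\delta(V,W_n)^p) + \E(\delta(V,W_n))^p\right). \]
By the raw moment bound the first term is $O((\log n/n)^p)$, and by Proposition \ref{expectation} the second is $O(1/n^p)$, so both contributions are $O((\log n / n)^p)$.

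The main obstacle is bookkeeping rather than conceptual: one must check that the polynomial factor $n^{k+3}$ in Lemma \ref{prob} is defeated by the choice of $f(n)$, which is where the doubled hypothesis ($r \geq 2$ so that $(1-r)/r \leq -\tfrac{1}{2}$) is essential. Once the constant $c$ in $\tau_n$ is chosen large enough in terms of $p$, $k$, $q$, and $r$, the tail term is negligible and the simple split above yields the claimed bound on both moments.
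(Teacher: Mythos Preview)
Your argument is correct and, for the raw moment, essentially identical to the paper's: both set a logarithmic threshold, bound the below-threshold contribution trivially, and kill the above-threshold contribution with the tail estimate of Lemma~\ref{prob}, using $r\geq 2$ so that $(1-r)/r\leq -\tfrac12$ and the polynomial factor $n^{k+3}$ is defeated by a large enough constant in the threshold.

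For the central moment you take a slightly different (and tidier) route: you invoke the convexity inequality $|a-b|^p \leq 2^{p-1}(a^p+b^p)$ to reduce immediately to the raw-moment bound plus Proposition~\ref{expectation}, whereas the paper repeats the threshold split a second time, bounding $\bigl|\,i/\binom{n+1}{2}-\E_n\,\bigr|$ by $O(\log n/n)$ on the small-$i$ range and using the tail bound again on the large-$i$ range. Your reduction avoids that repetition at no cost; the paper's version has the mild advantage of not needing $p\geq 1$ for the convexity step (though $|a-b|^p\leq a^p+b^p$ for $0<p<1$ would cover that case anyway).
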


\begin{proof}
	Let us use Lemma \ref{prob} to first bound the $p$-th raw moments for $\delta(V,W_n)$, assuming $r\geq 2$.
	To minimize our bound, 
	generalize the threshold function from Theorem~\ref{var}:
	\[ s_p(n) = \frac{r}{1-r}\log_q(n^{-(k+5+p)}) = t_p \log n, \] 
	where $t_p = \frac{r(k+5+p)}{(r-1)\log(q)} > 0$.
	\begin{eqnarray*}
		\E(\delta(V,W_n)^p) & = & \sum_{i = 0}^{{n+1 \choose 2}} \mathbb{P}\left(\delta(V,W_n) = \frac{i}{{n+1 \choose 2}}\right)\left(\frac{i}{{n+1 \choose 2}}\right)^p\\
		& < & \sum_{i = 0}^{\floor{n\cdot s_p(n)}} \mathbb{P}\left(\delta(V,W_n) = \frac{i}{{n+1 \choose 2}}\right)\left(\frac{i}{{n+1 \choose 2}}\right)^p \\&&+ \sum_{i = \ceil{n\cdot s_p(n)}}^{{n+1 \choose 2}} n^{k+3}q^{s_p(n)(1-r)/r}\left(\frac{i}{{n+1 \choose 2}}\right)^p\\
		& < & \left(\frac{n\cdot s_p(n)}{{n+1 \choose 2}}\right)^p +  n^{k+5 }q^{s_p(n)(1-r)/r}\\
		& = & \left(\frac{nt_p \log n}{{n+1 \choose 2}}\right)^p +  n^{k+5 }q^{\log_q\left(n^{-(k+5+p)}\right)}\\
		&= &O_p\left(\left(\frac{\log n}{n}\right)^p\right).
	\end{eqnarray*}
	
	Setting $p=1$, there exists some $c>2$ such that $\E_n= \E(\delta(V,W_n)) < (c\log n)/n$. We use this upper bound on the expectation (1st raw moment) to bound the central moments. 
	
	\begin{eqnarray*}
		\E(\left|\delta(V,W_n) - \E_n\right|^p) & = &  \sum_{i = 0}^{{n+1 \choose 2}} \mathbb{P}\left(\delta(V,W_n) = \frac{i}{{n+1 \choose 2}}\right)\left|\frac{i}{{n+1 \choose 2}} - \E_n \right|^p\\
		& \leq &  \sum_{i = 0}^{\floor{n\cdot s_p(n)}}  \mathbb{P}\left(\delta(V,W_n) = \frac{i}{{n+1 \choose 2}}\right)\left(\frac{c\log n}{n}\right)^p \\
			& & + \sum_{i = \ceil{ns_p(n)}}^{{n+1 \choose 2}} \mathbb{P}\left(\delta(V,W_n) = \frac{i}{{n+1 \choose 2}}\right)\left(1\right)^p\\
		& < &\left(\frac{c\log n}{n}\right)^p + n^{k+5}q^{s_p(n)(1-r)/r}\\
		&= &O_p\left(\left(\frac{\log n}{n}\right)^p\right).
	\end{eqnarray*}
\end{proof}

\begin{qun}
For nondoubled word $V$, to what extent is the density of $V$ in random words concentrated about its mean?
\end{qun}

\bibliography{Rorabaugh-refs}{}
\bibliographystyle{amsplain}

\label{lastpage}

\end{document}